
\documentclass[10pt, english]{elsarticle}
\usepackage{amsthm}
\usepackage{amsmath}
\usepackage{latexsym, amssymb}
\usepackage{txfonts}
\usepackage{mathtools}
\usepackage{color}
\usepackage{babel}
\usepackage[all]{xy}
\usepackage[capitalise]{cleveref}

\newtheorem{thm}{Theorem}[section] 

\newtheorem{cor}[thm]{Corollary}

\newtheorem{lem}[thm]{Lemma}
\newtheorem{prop}[thm]{Proposition}

\theoremstyle{definition}
\newtheorem{rem}[thm]{Remark}

\newcommand\operA[2]{{\if!#2!\operatorname{#1}\else{\operatorname{#1}_{#2}^{\phantom{I}}}\fi}} 

%
%
%
%
%
%
%
%

\def\tr{{\operatorname{Tr}}}
\def\dlog{{\operatorname{dlog}}}

\def\norm{{\operatorname{N}}}

\def\s{\sigma}


\newcommand{\Trace}[1][]{\if!#1!\operatorname{Tr}\else{\operatorname{Tr}_{#1}^{\phantom{I}}}\fi} 

\long\def\forget#1\forgotten{{}} %

\def\({\left(}
\def\){\right)}


\newcommand\LAY[3][]{{\begin{array}{c}\mbox{#2} \if#1!{}\else{+}\fi \\ \mbox{#3}\end{array}}}

\makeatletter
\newcommand{\bigperp}{%
  \mathop{\mathpalette\bigp@rp\relax}%
  \displaylimits
}

\newcommand{\bigp@rp}[2]{%
  \vcenter{
    \m@th\hbox{\scalebox{\ifx#1\displaystyle2.1\else1.5\fi}{$#1\perp$}}
  }%
}
\makeatother

\renewcommand{\geq}{\geqslant}
\renewcommand{\leq}{\leqslant}

\makeatletter
\def\ps@pprintTitle{%
 \let\@oddhead\@empty
 \let\@evenhead\@empty
 \def\@oddfoot{\centerline{\thepage}}%
 \let\@evenfoot\@oddfoot}
\makeatother

\newif\iffurther
\furtherfalse

\input xy
\input xyidioms.tex
\usepackage{xy}
\xyoption{all} %


\begin{document}
\begin{frontmatter}

\title{Linkage of Symbol $p$-Algebras of Degree 3}

\author{Adam Chapman}
\ead{adam1chapman@yahoo.com}
\address{Department of Computer Science, Tel-Hai Academic College, Upper Galilee, 1220800 Israel}

\begin{abstract}
Given a field $F$ of characteristic $3$ and division symbol $p$-algebras $[\alpha,\beta)_{3,F}$ and $[\alpha,\gamma)_{3,F}$ of degree $3$ over $F$, we prove that if $\alpha \dlog(\beta)\wedge \dlog(\gamma)$ is trivial in the Kato-Milne cohomology group $H_3^3(F)$ then the algebras share a common splitting field which is an inseparable degree 3 extension of either $F$ or a quadratic extension of $F$. In the special case of quadratically closed fields, if $\alpha \dlog(\beta)\wedge \dlog(\gamma)=0$, then they share an inseparable degree 3 extension of $F$.
 \end{abstract}

\begin{keyword}
Kato-Milne Cohomology, Fields of Positive Characteristic, Central Simple Algebras, Division Algebras, Symbol Algebras, $p$-Algebras, Linkage
\MSC[2010] 16K20 (primary); 11E04, 11E81, 19D45 (secondary)
\end{keyword}
\end{frontmatter}

\section{Introduction}

There are several different levels of how closely related two division algebras are to each other.
The closest connection is being isomorphic.
Nonisomorphic algebras can still have something in common, namely a maximal subfield, in which case we say the algebras are ``linked".
If they do not share a maximal subfield, the algebras are non-linked.
For quaternion algebras $Q_1$ and $Q_2$ over a field $F$, the index of $Q_1 \otimes Q_2$ measures the strength of the connection -- being 1 when they are isomorphic, 2 when they are nonisomorphic but linked, and 4 when they are non-linked. The linkage properties of quaternion algebras over a given field show strong connections to the arithmetic properties of the field, such as the $u$-invariant (see \cite{ChapmanDolphin:2017}, \cite{ChapmanDolphinLeep} and \cite{ChapmanMcKinnie:2018} for reference).

The story becomes more complicated for division symbol $p$-algebras of prime degree $p$ over fields $F$ of $\operatorname{char}(F)=p$.
Such algebras admit the structure
$$[\alpha,\beta)_{p,F}=F \langle x,y : x^p-x=\alpha, y^p=\beta, y x y^{-1}=x+1 \rangle$$
for some $\alpha \in F$ and $\beta \in F^\times$ such that $\alpha \not \in \wp(F)=\{\lambda^p-\lambda : \lambda \in F\}$ and $\beta$ is not a norm in the field extension $F[\wp^{-1}(\alpha)]/F$.
For two such algebras, $A_1$ and $A_2$, we say that they are ``cyclically linked" if they share a cyclic degree $p$ extension of $F$, in which case one can write $A_1=[\alpha,\beta_1)_{p,F}$ and $A_2=[\alpha,\beta_2)_{p,F}$ for some $\alpha,\beta_1,\beta_2$. We say the algebras are ``inseparably linked" if they share a purely inseparable degree $p$ extension of $F$, in which case one can write $A_1=[\alpha_1,\beta)_{p,F}$ and $A_2=[\alpha_2,\beta)_{p,F}$ for some $\alpha_1,\alpha_2,\beta$.
In \cite{Chapman:2015} it was proven that inseparable linkage implies cyclic linkage, and examples were provided to demonstrate that the converse is in general false. 

We say that the algebras are totally cyclically (or inseparably) linked if every cyclic (purely inseparable) degree $p$ field extension of $F$ that embeds into one of them, embeds also into the other.
Examples of non-isomorphic totally cyclically linked algebras appear in \cite{ChapmanDolphinLaghribi} for $p=2$, where it is also shown that total cyclic linkage and total inseparable linkage are independent properties that do not imply each other.
See Section \ref{Construction} for the construction of totally cyclically or inseparably linked algebras for arbitrary $p$.
One can outline the story in the following diagram
$$\xymatrix{
\text{Total Cyclic Linkage}\ar@{->}[d]\ar@{->}[drr]^{?}\ar@{<->}[rr]|{\backslash}& &\text{Total Inseparable Linkage}\ar@{->}[d]\\
\text{Cyclic Linkage}\ar@/_3ex/@{->}[rr]|{\backslash}\ar@{<-}[rr]& &\text{Inseparable Linkage.}
}$$
Clearly totally inseparable linkage implies cyclic linkage. The missing part in this puzzle is whether total cyclic linkage implies inseparable linkage.

In this paper we tackle the following two problems:
\begin{enumerate}
\item Find a sufficient condition for cyclically linked division symbol $p$-algebras to be inseparably linked.
\item Does total cyclic linkage imply inseparable linkage?
\end{enumerate}
We provide answers to both problems in the case of $p=3$ for quadratically closed fields.
Note that answers to both problems exist in the literature for $p=2$: Problem 1 was answered in \cite{ElduqueVilla:2005} and \cite{ChapmanGilatVishne:2017} and Problem 2 in \cite{ChapmanDolphin:2018}.

\section{Kato-Milne Cohomology}

Assume $F$ is a field of characteristic $p>0$. For $n \geq 0$, the Kato-Milne Cohomology group $H_p^{n+1}(F)$ is defined to be the cokernel of the Artin-Schreier map
$$\wp : \Omega_F^n \rightarrow \Omega_F^n/\text{d} \Omega_F^{n-1}$$
$$\alpha \dlog(\beta_1) \wedge \dots \wedge \dlog(\beta_n) \mapsto (\alpha^p-\alpha) \dlog(\beta_1) \wedge \dots \wedge \dlog(\beta_n).$$
In particular, $H_p^1(F)=F/\wp(F)$. 
It is known that 
$H_p^2(F) \cong {_pBr(F)}$ (see \cite[Theorem 9.2.4]{GS}).
The isomorphism is given by the map
$$\alpha \dlog(\beta) \mapsto [\alpha,\beta)_{p,F}.$$
The elements of $H_p^3(F)$ seem to be connected to linkage properties. For $p=2$, the quaternion algebras $[\alpha,\beta)_{2,F}$ and $[\alpha,\gamma)_{2,F}$ are inseparably linked if and only if $\alpha \dlog(\beta) \wedge \dlog(\gamma)$ is trivial in $H_2^3(F)$ (see \cite[Theorem 5.3]{ChapmanGilatVishne:2017}).
The goal of this paper is to provide an analogous result for $p=3$.
The following known result will be useful in our study:
\begin{thm}[{\cite[Th\'eor\`eme 6]{Gille:2000}}]
The class of $\alpha \dlog(\beta)\wedge \dlog(\gamma)$ is trivial in $H_p^3(F)$ if and only if $\gamma$ is the norm of an element in the algebra $[\alpha,\beta)_{p,F}$.
\end{thm}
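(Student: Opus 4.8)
The plan is to recognize the symbol as a cup product and prove the two implications separately, with the content lying almost entirely in the converse. Using the isomorphism $H_p^2(F)\cong{}_p\Br(F)$ recalled above, set $[A]=\alpha\dlog(\beta)\in H_p^2(F)$ for the class of $A=[\alpha,\beta)_{p,F}$; the product on Kato--Milne cohomology then identifies
$$\alpha\dlog(\beta)\wedge\dlog(\gamma)=[A]\cup\dlog(\gamma)\in H_p^3(F),$$
so the assertion reads: $[A]\cup\dlog(\gamma)=0$ if and only if $\gamma\in\operatorname{Nrd}(A^\times)$.

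For the implication ``$\gamma$ a reduced norm $\Rightarrow$ triviality'' I would argue by the projection formula. We may assume $A$ is a division algebra, since otherwise $[A]=0$, $\operatorname{Nrd}$ is surjective, and both sides hold. If $\gamma=\operatorname{Nrd}(a)$ with $a\in A^\times$, then either $a\in F$, whence $\gamma\in F^{\times p}$ and $\dlog(\gamma)=0$, or $L:=F(a)$ is a degree-$p$ maximal subfield of $A$, hence a splitting field, and $\gamma=\Norm[L/F](a)$. Since corestriction on $H^1$ is induced by the field norm, $\dlog(\gamma)=\cores[L/F]\dlog(a)$, and the projection formula gives
$$[A]\cup\dlog(\gamma)=[A]\cup\cores[L/F]\dlog(a)=\cores[L/F]\!\left(\res[L][A]\cup\dlog(a)\right)=0,$$
because $\res[L][A]=0$.

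For the converse I would pass to the cyclic presentation $A=(E/F,\sigma,\beta)$, where $E=F[\wp^{-1}(\alpha)]$ is the Artin--Schreier extension, and invoke the long exact sequence attached to the degree-$p$ cyclic extension $E/F$, whose relevant segment is
$$K^M_2(E)/p\xrightarrow{\Norm[E/F]}K^M_2(F)/p\xrightarrow{\;\alpha\cup{-}\;}H_p^3(F)\xrightarrow{\res[E]}H_p^3(E),$$
the connecting map sending $\{\beta,\gamma\}$ to $\alpha\dlog(\beta)\wedge\dlog(\gamma)$. That this sequence is exact can be verified in low degree: for $K^M_1$ it recovers the classical facts that $[\alpha,\gamma)_{p,F}$ splits iff $\gamma\in\Norm[E/F](E^\times)$ and that $\Br(E/F)$ is generated by the cyclic algebras $[\alpha,\gamma)_{p,F}$. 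Exactness at $K^M_2(F)/p$ then converts the hypothesis $[A]\cup\dlog(\gamma)=0$ into the statement that $\{\beta,\gamma\}$ lies in the image of the transfer $\Norm[E/F]\colon K^M_2(E)/p\to K^M_2(F)/p$.

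The remaining step --- and the one I expect to be the main obstacle --- is to convert this $K$-theoretic norm condition into the ring-theoretic conclusion $\gamma\in\operatorname{Nrd}(A^\times)$; equivalently, to establish the exactness of
$$A^\times\xrightarrow{\operatorname{Nrd}}F^\times\xrightarrow{\;{-}\cup[A]\;}H_p^3(F)$$
at $F^\times$ (modulo $p$-th powers, which are automatically reduced norms). The inclusion $\operatorname{Nrd}(A^\times)\subseteq\ker({-}\cup[A])$ is precisely the first implication; the reverse inclusion is the hard one, because the transfer on $K^M_2$ carries no elementary formula. I would handle it geometrically, via the Severi--Brauer variety $X=SB(A)$, of dimension $p-1$ and split by $F(X)$: resolving the Kato--Milne sheaf by its Gersten complex on $X$ controls $\ker\!\left(\res[F(X)]\colon H_p^3(F)\to H_p^3(F(X))\right)$ and simultaneously ties the group $\operatorname{Nrd}(A^\times)$ to the arithmetic of the generic fibre, so that comparing the two computations yields $\ker({-}\cup[A])\subseteq\operatorname{Nrd}(A^\times)$. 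This Severi--Brauer/differential-form analysis is the technical heart of the argument and the place where the positive-characteristic machinery genuinely enters; everything preceding it is formal.
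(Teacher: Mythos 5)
This statement is not proved in the paper at all: it is quoted verbatim from Gille (Th\'eor\`eme 6 of the cited reference), so the only honest comparison is between your sketch and the actual content of that theorem. Your first implication (reduced norm $\Rightarrow$ triviality) is fine and standard: reduce to the division case, note that $\gamma=\Norm_{L/F}(a)$ for the maximal subfield $L=F(a)$, and apply the projection formula for the transfer on Kato--Milne cohomology; this is an honest, complete argument modulo the (known) existence of corestriction maps satisfying the projection formula in characteristic $p$.

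The converse, however, has two genuine gaps, and they are precisely where the entire difficulty of the theorem lives. First, the exactness of
$$K^M_2(E)/p \xrightarrow{\Norm_{E/F}} K^M_2(F)/p \xrightarrow{\ \alpha\cup -\ } H_p^3(F) \xrightarrow{\res_E} H_p^3(E)$$
at $K^M_2(F)/p$ is not something one can ``verify in low degree'': checking the $K^M_1$ analogue (the classical norm criterion for splitting of $[\alpha,\gamma)_{p,F}$) establishes nothing about the $K^M_2$ statement, which is a Hilbert-90--type theorem for Milnor $K$-theory of Artin--Schreier extensions in characteristic $p$ (Izhboldin/Kato), a deep result that must be invoked or proved, not inferred by analogy. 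Second, and more seriously, the step you yourself flag as the main obstacle --- passing from the $K$-theoretic norm condition (or directly from $\ker(-\cup[A])$) to $\gamma\in\operatorname{Nrd}(A^\times)$ --- is exactly the assertion of Gille's theorem, and your treatment of it is a description of machinery (``resolving the Kato--Milne sheaf by its Gersten complex on $SB(A)$ \dots comparing the two computations yields the inclusion'') rather than an argument. Nothing is said about why such a Gersten resolution exists and is exact in characteristic $p$ on a Severi--Brauer variety, what the relevant complex computes, or how the group $\operatorname{Nrd}(A^\times)$ actually emerges from it. So the proposal is a plausible roadmap whose formal outer layers are correct, but whose technical core --- the part that makes the theorem a theorem --- is absent.
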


\begin{rem}
The result from \cite{ChapmanGilatVishne:2017} that the division quaternion algebras $[\alpha,\beta)_{2,F}$ and $[\alpha,\gamma)_{2,F}$ over a field $F$ of characteristic 2 are inseparably linked if and only if $\alpha \dlog(\beta)\wedge \dlog(\gamma)=0$ can be concluded directly from this theorem:
they are inseparably linked if and only if the pure part of the underlying Albert form is isotropic, i.e., if $\langle 1 \rangle \perp \beta [1,\alpha] \perp \gamma [1,\alpha]$ is isotropic. This form is isotropic if and only if there is a nonzero element $\lambda \in L=F[\wp^{-1}(\alpha)]$ such that $\norm_{L/F}(\lambda)\gamma$ is represented by $\langle 1 \rangle \perp \beta [1,\alpha]$. Note that the form $[1,\alpha]$ is the norm form from $L$ to $F$. Since the norm form is multiplicative, there exists $\lambda \in L^\times$ such that $\norm_{L/F}(\lambda)\gamma$ is represented by $\langle 1 \rangle \perp \beta [1,\alpha]$ if and only if $\gamma$ is represented by $[1,\alpha] \perp \beta [1,\alpha]$, which is the norm form of $[\alpha,\beta)_{2,F}$.
\end{rem}

\section{Construction of Totally Cyclically and Inseparably Linked Algebras}\label{Construction}

In this section we show how to construct totally cyclically or inseparably linked symbol $p$-algebras which do not generate the same subgroup of the Brauer group. We follow the construction from \cite{Tikhonov:2016}, and make use of the theory of valued division algebras whose chief reference is \cite{TignolWadsworth:2015}. Recall that given a division algebra $D$ over a Henselian valued field $F$, the valuation extends uniquely from $F$ to $D$. We denote the value group by $\Gamma_D$ and the residue algebra by $\overline{D}$, and the following ``fundamental inequality" is satisfied: 
$[\overline{D}:\overline{F}] \cdot [\Gamma_D:\Gamma_F] \leq [D:F].$
The algebra is called ``defectless" if the inequality above is an equality, and ``unramified" if $\Gamma_D=\Gamma_F$.
\begin{thm}[{\cite[Theorem 1]{Morandi:1989}}]\label{Morandi}
Suppose $F$ is a Henselian valued field, $D$ and $E$ are division algebras over $F$ such that 
\begin{enumerate}
\item $D$ is defectless,
\item $\overline{D} \otimes \overline{E}$ is a division algebra, and
\item $\Gamma_D \cap \Gamma_E=\Gamma_F$.
\end{enumerate}
Then $D \otimes E$ is a division algebra.
\end{thm}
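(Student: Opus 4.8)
The plan is to detect whether $A := D \otimes_F E$ is a division algebra by equipping it with a value function extending $v_F$ and analysing the associated graded ring; this is the natural Henselian strategy, since over a Henselian base the valuation of $F$ extends uniquely to every division algebra, and from the graded object of a central simple algebra one can read off whether it is division. As $A$ is finite-dimensional over $F$, it suffices to show that $A$ has no nonzero zero divisors.

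First I would use hypothesis (1). Because $D$ is defectless, its valuation $v_D$ admits a \emph{valuation basis} $d_1,\dots,d_m$ over $F$, i.e.\ an $F$-basis with $v_D\big(\sum_i a_i d_i\big)=\min_i\big(v_F(a_i)+v_D(d_i)\big)$ for all $a_i\in F$, whose homogeneous images form a $\operatorname{gr}(F)$-basis of $\operatorname{gr}(D)$. Writing $A=\bigoplus_{i=1}^m d_i\otimes E$ as a right $E$-module and letting $v_E$ be the unique valuation on $E$, define
$$w\Big(\sum_{i} d_i\otimes \epsilon_i\Big)=\min_i\big(v_D(d_i)+v_E(\epsilon_i)\big),\qquad \epsilon_i\in E.$$
One checks that $w$ is a surmultiplicative value function with $w(a)=\infty$ iff $a=0$, and that its associated graded ring is the graded tensor product $\operatorname{gr}(A)\cong \operatorname{gr}(D)\otimes_{\operatorname{gr}(F)}\operatorname{gr}(E)$. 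Only $D$ need be defectless here: the valuation basis is taken on the $D$-side, while $E$ enters only through its valuation. The point of this reduction is that if $\operatorname{gr}(A)$ turns out to be a graded division ring, then $w$ is in fact multiplicative and leading terms cannot cancel, so $A$ has no zero divisors and we are done.

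The crux is therefore to prove that $R:=\operatorname{gr}(D)\otimes_{\operatorname{gr}(F)}\operatorname{gr}(E)$ is a graded division ring, and this is where (2) and (3) are used. Here $\operatorname{gr}(D)$ and $\operatorname{gr}(E)$ are graded division algebras over the graded field $\operatorname{gr}(F)$, graded by $\Gamma_D$ and $\Gamma_E$, with degree-zero residues $\overline{D}$ and $\overline{E}$ over $\overline{F}$; in particular every nonzero homogeneous element of $\operatorname{gr}(D)$ or $\operatorname{gr}(E)$ is invertible. The grading group of $R$ is $\Gamma_D+\Gamma_E$, and hypothesis (3), $\Gamma_D\cap\Gamma_E=\Gamma_F$, makes the map $\Gamma_D/\Gamma_F\times\Gamma_E/\Gamma_F\to(\Gamma_D+\Gamma_E)/\Gamma_F$ a bijection. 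Two consequences: first, the identity component $R_0$ equals $\overline{D}\otimes_{\overline{F}}\overline{E}$, a division ring by (2); second, every degree $\gamma\in\Gamma_D+\Gamma_E$ splits as $\gamma=\gamma_D+\gamma_E$ with $\gamma_D\in\Gamma_D,\ \gamma_E\in\Gamma_E$, so $R_\gamma$ contains the unit $u\otimes v$ for any homogeneous units $u\in\operatorname{gr}(D)_{\gamma_D},\ v\in\operatorname{gr}(E)_{\gamma_E}$. I would then invoke the elementary criterion that a graded ring whose identity component is a division ring and each of whose nonzero homogeneous components contains a unit is a graded division ring: given $0\neq x\in R_\gamma$ and a unit $u\in R_\gamma$, the element $xu^{-1}\in R_0$ is nonzero and hence invertible, so $x=(xu^{-1})u$ is invertible. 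Thus $R$ is a graded division ring.

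I expect the main obstacle to be the bookkeeping that makes this precise: constructing $w$ and checking it is well defined (independent of the chosen valuation basis of $D$) and surmultiplicative, and identifying $\operatorname{gr}(A)$ with $R$ so that the graded-division property of $R$ genuinely forces $A$ to be a division algebra. Conceptually, the delicate point is the interplay of (2) and (3): one must verify that condition (3) really prevents extra elements from falling into the residue layer $R_0$ — if $\Gamma_D$ and $\Gamma_E$ overlapped beyond $\Gamma_F$, homogeneous units of opposite nonzero degree would multiply into $R_0$, enlarging it past $\overline{D}\otimes_{\overline{F}}\overline{E}$ and potentially destroying the division property even when (2) holds. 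Defectlessness of $D$, hypothesis (1), is exactly what supplies the valuation basis needed to build $w$ and guarantees that $\operatorname{gr}(A)$ has the expected size, so that no information about $A$ is lost in passing to the graded ring; notably $E$ is not required to be defectless, since it contributes only through its valuation.
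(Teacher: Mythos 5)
The paper does not prove this statement at all: it is imported verbatim from \cite[Theorem 1]{Morandi:1989} and used as a black box in Lemma~\ref{MorandiCor}, so there is no internal proof to compare yours against. Evaluated on its own merits, your argument is correct, and it is in substance the standard proof of Morandi's theorem: Morandi's published proof builds exactly your value function $w$ from a valuation (splitting) basis of the defectless algebra $D$ and verifies directly that it is a valuation, while your graded reformulation is the version later systematized in the gauge framework of Tignol and Wadsworth (the paper's own reference \cite{TignolWadsworth:2015}). The essential checks all go through: defectlessness of $D$ makes $\operatorname{gr}(D)$ free over $\operatorname{gr}(F)$ of rank $[D:F]$, which is all that is needed to identify $\operatorname{gr}(D\otimes E)$ with $\operatorname{gr}(D)\otimes_{\operatorname{gr}(F)}\operatorname{gr}(E)$ (so you are right that $E$ may be defective); condition (3) gives the bijection $\Gamma_D/\Gamma_F\times\Gamma_E/\Gamma_F\to(\Gamma_D+\Gamma_E)/\Gamma_F$, which both pins the degree-zero component down to $\overline{D}\otimes_{\overline{F}}\overline{E}$ and puts a unit $u\otimes v$ in every nonzero homogeneous component; condition (2) plus your elementary criterion (division degree-zero part and a unit in each component imply graded division ring) is sound; and a surmultiplicative value function whose associated graded ring is a graded division ring is multiplicative, so $D\otimes E$ has no zero divisors and, being finite-dimensional over $F$, is a division algebra. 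The two verifications you flag as bookkeeping are indeed routine: surmultiplicativity of $w$ follows from the valuation-basis property of $\{d_i\}$, and the isomorphism $\operatorname{gr}(D\otimes E)\cong\operatorname{gr}(D)\otimes_{\operatorname{gr}(F)}\operatorname{gr}(E)$ can be seen by noting both sides are free right $\operatorname{gr}(E)$-modules of rank $[D:F]$ and the natural map carries basis to basis.
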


As a result of this theorem we obtain:
\begin{lem}\label{MorandiCor}
Let $A$ be a division algebra over a field $F$ of $\operatorname{char}(F)=p$. Write $A_{F(x)}$ for $A \otimes F(x)$ where $F(x)$ is the function field in one variable over $F$.
\begin{itemize}
\item If $K=F[\wp^{-1}(\gamma)]$ is a cyclic field extension of $F$ of degree $p$ for which $A_K$ remains a division algebra, then $A_{F(x)} \otimes [\gamma,x)_{p,F(x)}$ is a division algebra.
\item If $L=F[\sqrt[p]{\delta}]$ is a purely inseparable field extension of $F$ of degree $p$ for which $A_L$ remains a division algebra, then $A_{F(x)} \otimes [x^{-1},\delta)_{p,F(x)}$ is a division algebra.
\end{itemize}
\end{lem}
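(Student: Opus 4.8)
The plan is to realize the two algebras over the Henselian field $F((x))$ of formal Laurent series, verify the hypotheses of Morandi's criterion (\Tref{Morandi}) there, and then descend the conclusion back to $F(x)$. Throughout, equip $F((x))$ with the $x$-adic valuation $v$, so that $\overline{F((x))}=F$ and $\Gamma_{F((x))}=\Z$, and set $E=A_{F((x))}=A\otimes_F F((x))$. Since $A$ is a division algebra over $F$ and the extension $F((x))/F$ is unramified, $E$ is an unramified division algebra with residue $\overline{E}=A$ and value group $\Gamma_E=\Z$.

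Next I would analyze the symbol factor $D$, treating the two cases separately, since this is where they genuinely differ. In the cyclic case $D=[\gamma,x)_{p,F((x))}$ is the cyclic algebra attached to the \emph{unramified} degree $p$ field extension $K\otimes_F F((x))$ together with $x$; because that extension is unramified every norm has valuation in $p\Z$, whereas $v(x^i)=i\notin p\Z$ for $1\le i\le p-1$, so no power $x^i$ is a norm and $D$ is division. The generator $t$ with $t^p=x$ contributes $v(t)=\tfrac1p$, and the generator $s$ with $s^p-s=\gamma$ has residue generating $K$; the fundamental inequality applied to $\tfrac1p\Z\subseteq\Gamma_D$ and $K\subseteq\overline{D}$ then forces $\Gamma_D=\tfrac1p\Z$, $\overline{D}=K$, and defectlessness. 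In the inseparable case $D=[x^{-1},\delta)_{p,F((x))}$ is the cyclic algebra of the \emph{totally ramified} degree $p$ extension $F((x))[\wp^{-1}(x^{-1})]$ together with $\delta$; here $\delta$ is a unit, and since the Galois group acts trivially on the (trivial) residue field, the residue of a norm of a unit is a $p$-th power in $F$, while $\delta\notin F^{\times p}$ because $L=F[\sqrt[p]{\delta}]$ is a field --- so again $D$ is division, now with $v(s)=-\tfrac1p$ (where $s^p-s=x^{-1}$) and with the generator $t$, $t^p=\delta$, giving $L\subseteq\overline{D}$; the fundamental inequality yields $\Gamma_D=\tfrac1p\Z$, $\overline{D}=L$, and defectlessness.

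With these invariants in hand, Morandi's three hypotheses are routine: $D$ is defectless by the above; the residue product $\overline{D}\otimes\overline{E}$ is $K\otimes_F A=A_K$ (resp. $L\otimes_F A=A_L$), a division algebra by hypothesis; and $\Gamma_D\cap\Gamma_E=\tfrac1p\Z\cap\Z=\Z=\Gamma_{F((x))}$. Hence $D\otimes E$ is a division algebra over $F((x))$. To finish I would descend: the algebra $A_{F(x)}\otimes[\gamma,x)_{p,F(x)}$ (resp. with $[x^{-1},\delta)$) has the same degree as its scalar extension to $F((x))$, which is exactly $E\otimes D$; since the index can only drop under a field extension but $E\otimes D$ already has full index, the index over $F(x)$ is full as well, so the algebra is a division algebra over $F(x)$.

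The main obstacle is the middle step --- proving that the symbol factor $D$ is itself division with the asserted residue algebra and value group. This is where the two parts of the lemma require separate arguments, the auxiliary Artin--Schreier extension being unramified in the cyclic case but totally ramified in the inseparable case, so the norm computation ruling out the dangerous norms must be organized differently in each. Once $D$ is understood, defectlessness, the value-group intersection, and the descent from $F((x))$ to $F(x)$ are all formal.
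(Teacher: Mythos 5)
Your argument is correct, and its core is the same as the paper's: both rest on Theorem~\ref{Morandi} applied to the $x$-adic valuation, with the two tensor factors arranged so that one is unramified with residue $A$ and the other has residue $K$ (resp.\ $L$), making the residue product $A_K$ (resp.\ $A_L$), which is division by hypothesis, while the value groups intersect in $\Gamma_F$. The difference is in where and how carefully this is carried out. The paper works directly over $F(x)$ with the $x$-adic valuation, takes $D=A_{F(x)}$ as the defectless factor (so defectlessness is free), and simply \emph{asserts} that the symbol factor has residue algebra $K$ (resp.\ $L$); in particular it never verifies that the symbol factor is a valued division algebra at all. You instead pass to the completion $F((x))$, prove by explicit norm computations (unramified versus totally ramified auxiliary extension) that the symbol factor is division there, pin down its residue and value group $\tfrac{1}{p}\Z$ via the fundamental inequality, apply Morandi over $F((x))$, and descend to $F(x)$ by the index argument. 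Your extra steps buy something genuine: the $x$-adic valuation on $F(x)$ is \emph{not} Henselian, so Theorem~\ref{Morandi} as quoted (which hypothesizes a Henselian base) does not literally apply over $F(x)$; the paper's shortcut implicitly leans on the more general form of Morandi's original theorem, valid for valued division algebras over arbitrary valued fields, or equivalently on exactly the completion-and-descent step you spell out. The cost is length --- the paper's proof is a few lines, while yours must also establish facts about the symbol factors that the paper treats as known --- but your version is self-contained relative to the theorem as stated.
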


\begin{proof}
Consider the $x$-adic valuation on $F(x)$.
We keep the notation from Theorem \ref{MorandiCor} and put $D=A_{F(x)}$. Then $D$ has $\overline{D}=A$ as its residue algebra, and it is therefore defectless and unramified.
We put $E=[\gamma,x)_{p,F(x)}$ in 1 and $E=[x^{-1},\delta)_{p,F(x)}$ in 2, whose residue algebra $\overline{E}$ is $K$ or $L$ respectively, and so $\overline{D} \otimes \overline{E}$ is a division algebra.
The condition $\Gamma_D \cap \Gamma_E=\Gamma_F$ also holds true, because $D$ is unramified. Therefore $D\otimes E$ is a division algebra by Theorem \ref{Morandi}.
\end{proof}

We are now ready to explain how nonisomrphic totally cyclically or inseparably linked algebras are constructed:
\begin{thm}\label{TotalConst}
Consider two cyclically linked division symbol $p$-algebras $A$ and $B$ of degree $p$ over $F$ where $B$ is not isomorphic to $A^{\otimes t}$ for any integer $t$.
\begin{itemize}
\item There exists a field extension $M$ over which $A_M$ and $B_M$ remain division algebras, $B_M$ is not isomorphic to $A_M^{\otimes t}$ for any integer $t$, and $A_M$ and $B_M$ are totally cyclically linked.
\item There exists a field extension $T$ over which $A_T$ and $B_T$ remain division algebras, $B_T$ is not isomorphic to $A_T^{\otimes t}$ for any integer $t$, and $A_T$ and $B_T$ are totally inseparably linked.
\end{itemize} 
\end{thm}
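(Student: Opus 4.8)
The plan is to build the field $M$ (respectively $T$) as the union of a transfinite tower of function-field extensions, adjoining splitting fields one at a time so as to symmetrize the cyclic (respectively inseparable) splitting behaviour of the two algebras, while invoking Lemma~\ref{MorandiCor} and Theorem~\ref{Morandi} at every stage to keep everything division and to keep $B$ off the cyclic subgroup generated by $A$. First I would fix a common presentation: since $A$ and $B$ are cyclically linked, I may write $A=[\alpha,\beta)_{p,F}$ and $B=[\alpha,\mu)_{p,F}$ with shared cyclic field $F[\wp^{-1}(\alpha)]$. A cyclic degree $p$ extension $M[\wp^{-1}(\gamma)]$ embeds into a degree $p$ division algebra over $M$ exactly when the algebra is split by it, equivalently when its Brauer class can be rewritten with $\gamma$ in the first slot. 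Thus ``$A_M$ and $B_M$ totally cyclically linked'' is the same as saying that they admit exactly the same first slots modulo $\wp(M)$; the inseparable analogue replaces first slots by $p$-th-power second slots, with $M[\sqrt[p]{\delta}]$ embedding in $A_M$ iff $A_M\cong[\,\cdot\,,\delta)$.

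The iterative core is a chain $F=M_0\subseteq M_1\subseteq\dots\subseteq M_\lambda\subseteq\dots$ indexed by ordinals, maintaining three invariants: $A_{M_\lambda}$ and $B_{M_\lambda}$ are division, $B_{M_\lambda}\not\cong A_{M_\lambda}^{\otimes t}$ for $1\le t\le p-1$, and at limit ordinals $M_\lambda$ is the union of the earlier fields. At a successor stage I would locate a cyclic extension $M_\lambda[\wp^{-1}(\gamma)]$ embedding into exactly one of the two algebras -- say it splits $A$ but not $B$ -- and force it into the other by passing to a suitable function-field extension over which the class of $B$ acquires $\gamma$ as an admissible first slot. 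Lemma~\ref{MorandiCor}, together with Theorem~\ref{Morandi} and the $x$-adic valuation, is the instrument that guarantees this adjunction does not collapse anything we wish to keep: the fact that tensoring a division algebra with a generic symbol factor $[\gamma,x)_{p}$ leaves it division is exactly what lets me certify that $A$, $B$, and each difference algebra $B\otimes(\op{A})^{\otimes t}$ remain division over $M_{\lambda+1}$, so that both invariants survive the step. For the inseparable tower $T$ I would run the identical scheme with purely inseparable extensions $M_\lambda[\sqrt[p]{\delta}]$ and the second bullet of Lemma~\ref{MorandiCor}, using the symbol factors $[x^{-1},\delta)_{p}$ to install $\delta$ as a common $p$-th-power slot.

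Because each step can manufacture new cyclic (or inseparable) extensions, the process must be iterated through all ordinals up to a cardinal bounding the relevant extensions, and $M$ (respectively $T$) is taken to be the union of the entire tower. Total linkage then holds over the union: any cyclic degree $p$ extension of $M$ is already defined over some $M_\lambda$, where any asymmetry was scheduled and resolved at a later stage. The remaining properties transfer by directedness, since a splitting of $A_M$, a splitting of $B_M$, or an isomorphism $B_M\cong A_M^{\otimes t}$ would already be realized over a finitely generated, hence intermediate, subfield $M_\lambda$, contradicting the invariants maintained there.

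I expect the principal obstacle to be the transfinite close-off: one must enumerate the asymmetric splitting fields, keep resolving those newly produced at each stage, and prove that the union is genuinely \emph{saturated} rather than merely improved at each step, all while simultaneously protecting the non-isomorphism condition. The latter means controlling not just $A$ and $B$ but the whole family $B\otimes(\op{A})^{\otimes t}$, $1\le t\le p-1$, at every stage, and verifying uniformly along the tower that the slot-adjunction of the second paragraph really does leave all of these division. This uniform persistence of the division property under repeated generic adjunction is precisely the delicate point that Lemma~\ref{MorandiCor} is designed to supply.
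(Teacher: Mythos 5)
Your global architecture (iterate a one-step construction that installs an asymmetric splitting field into the algebra that lacks it, maintain the division and non-isomorphism invariants at each stage, and close off by a directed union, so that any witness over the union descends to some stage) is the same skeleton as the paper's proof, which simply says ``using this construction inductively.'' However, the single step that everything rests on has a genuine gap in your proposal. You never identify the ``suitable function-field extension,'' and the tool you assign to protect the invariants cannot do that job. The paper's step is: given a cyclic extension $K=F[\wp^{-1}(\gamma)]$ inside $B$ but not inside $A$, form $C=\op{A}_{F(x)}\otimes_{F(x)}[\gamma,x)_{p,F(x)}$ (division by Lemma~\ref{MorandiCor}), and take $R$ to be the function field of the \emph{Severi--Brauer variety} of $C$; over $R$ the class of $C$ dies, so $A_R\cong[\gamma,x)_{p,R}$ acquires the desired cyclic splitting field. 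Lemma~\ref{MorandiCor} (via Theorem~\ref{Morandi}) is used \emph{only} to show that $C$ itself is division; it says nothing about what survives base change to $R$, so your claim that ``tensoring a division algebra with a generic symbol factor $[\gamma,x)_{p}$ leaves it division is exactly what lets me certify that $A$, $B$, and each difference algebra $B\otimes(\op{A})^{\otimes t}$ remain division over $M_{\lambda+1}$'' is a misattribution: that fact certifies the division-ness of $C$, not the persistence of the other classes over $R$.

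The missing ingredients are precisely the ones that control the Brauer kernel of $R/F(x)$: by Amitsur--Saltman (\cite[Theorem 13.10]{Saltman:1998}), any central simple $D$ of degree $p$ over $F$ that is split by $R$ must have $D_{F(x)}$ Brauer equivalent to $C^{\otimes i}$ for some $i\in\{1,\dots,p\}$; one then rules this out by a ramification argument at the $x$-adic valuation ($C^{\otimes i}$ is ramified for $i<p$ while $D_{F(x)}$ is unramified, and $i=p$ is impossible because $D$ has exponent $p$ and remains nonsplit over the purely transcendental extension $F(x)$). Applying this with $D$ ranging over (the degree-$p$ algebras equivalent to) $A$, $B$, and $A^{\otimes t}\otimes\op{B}$ is what preserves both invariants in one stroke. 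Without this kernel computation plus the ramification argument, your inductive step is unsupported: a generic splitting field of $C$ could a priori split, or reduce the index of, any of the algebras you need to protect, and no amount of care with the transfinite bookkeeping in your third and fourth paragraphs can repair the step itself.
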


\begin{proof}
Suppose there is a cyclic degree $p$ field extension $K=F[\wp^{-1}(\gamma)]$ of $F$ which is a subfield of $B$ but not of $A$.
By Lemma \ref{MorandiCor} (1), $C=A_{F(x)}^{\text{op}} \otimes_{F(x)} [\gamma,x)_{p,F(x)}$ is a division algebra. Let $R$ be the function field of its Severi-Brauer variety.
Then $A_R$ is isomorphic to $[\gamma,x)_{p,R}$, and in particular has $K \otimes_F R$ as a splitting field.
Now take $D$ to be an arbitrary central simple algebra of degree $p$ over $F$. Suppose that $D$ is split by $R$.
Then $D_{F(x)}$ is Brauer equivalent to $C^{\otimes i}$ for some $i \in \{1,\dots,p\}$ by \cite[Theorem 13.10]{Saltman:1998}. If $i<p$ then $C^{\otimes i}$ ramifies at the $x$-adic valuation but $D_{F(x)}$ is unramified, and so $D_{F(x)}$ is not Brauer equivalent to $C^{\otimes i}$. Since the exponent of $D$ is $p$, it cannot be Brauer equivalent to $C^{\otimes p}$ either, contradiction. Therefore $D$ is not split by $R$.
By plugging in $D$ the symbol $p$-algebra equivalent to $A^{\otimes t} \otimes B^{\text{op}}$ for any integer $t$, we obtain that $B_R$ is not isomorphic to $A_R^{\otimes t}$ for any $t$.
Using this construction inductively we obtain a field extension $M$ of $F$ over which $A_M$ and $B_M$ remain division algebras, $B_M$ is not isomorphic to $A_M^{\otimes t}$ for any integer $t$, and $A_M$ and $B_M$ are totally cyclically linked.

Now suppose there is a purely inseparable degree $p$ field extension $L=F[\sqrt[p]{\delta}]$ of $F$ which is a subfield of $B$ but not of $A$.
By Lemma \ref{MorandiCor} (2), $C=A_{F(x)}^{\text{op}} \otimes_{F(x)} [x^{-1},\delta)_{p,F(x)}$ is a division algebra.
Let $R$ be the function field of its Severi-Brauer variety.
Then $A_R$ is isomorphic to $[x^{-1},\delta)_{p,R}$, and in particular has $L \otimes_F R$ as a splitting field.
Every central simple algebra $D$ of degree $p$ over $F$ remains a division algebra over $R$ for the same reason as in the previous case, and so $B_R$ is not isomorphic to $A_R^{\otimes t}$ for any $t$.
Using this construction inductively we obtain a field extension $T$ of $F$ over which $A_T$ and $B_T$ remain division algebras, $B_T$ is not isomorphic to $A_M^{\otimes t}$ for any integer $t$, and $A_T$ and $B_T$ are totally inseparably linked.
\end{proof}

To obtain an explicit example, one can start with $A=[1,\alpha)_{p,F}$ and $B=[1,\beta)_{p,F}$ over the function field $F=\mathbb{F}_p(\alpha,\beta)$ in two algebraically independent variables over the finite field $\mathbb{F}_p$ in $p$ elements, and apply Theorem \ref{TotalConst}. 

\section{Linkage of Symbol $p$-Algebras of Degree 3}

Here we prove the main results of the paper which deal with sufficient conditions for inseparable linkage.

\begin{thm}\label{norm6}
Suppose $F$ is a field of $\operatorname{char}(F)=3$.
Let $[\alpha,\beta)_{3,F}$ be a division symbol algebra, and $\gamma \in F^\times \setminus (F^\times)^3$.
Then $\alpha \dlog(\beta)\wedge \dlog(\gamma)$ is trivial in $H_3^3(F)$ if and only if there exists $\lambda \in L=E[\wp^{-1}(\alpha)]$ such that $[\alpha,\beta)_{3,E}$ contains the purely inseparable subfield $E[z : z^3=\norm_{L/E}(\lambda) \gamma]$ where $E$ is either $F$ or a quadratic extension of $F$.
\end{thm}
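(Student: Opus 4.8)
The plan is to route everything through Gille's theorem quoted above, which identifies the triviality of $\alpha\dlog(\beta)\wedge\dlog(\gamma)$ in $H_3^3(F)$ with $\gamma$ being a reduced norm from $A=[\alpha,\beta)_{3,F}$, and then to set up a dictionary between reduced norms and cubes of purely inseparable generators. The bridge is the following two remarks, valid over any base $E$. First, $L=E[\wp^{-1}(\alpha)]$ is a maximal subfield of $A_E=[\alpha,\beta)_{3,E}$, so for $\lambda\in L$ the field norm $\norm_{L/E}(\lambda)$ coincides with $\operatorname{Nrd}_{A_E/E}(\lambda)$. Second, a pure element $z\in A_E$, by which I mean $z^3\in E$ and $z\notin E$, has reduced characteristic polynomial $t^3-z^3$, so $\operatorname{Nrd}_{A_E/E}(z)=z^3$ and its reduced trace and subleading coefficient both vanish. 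Note also that for $E/F$ of degree $2$ the algebra $A_E$ stays division (degree $3$ is coprime to $[E:F]=2$), so the subfield in the statement is a genuine cubic purely inseparable extension whenever its generator is noncentral.

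For the direction assuming the subfield exists, let $z\in A_E$ be the stated generator, so $z^3=\norm_{L/E}(\lambda)\gamma=\operatorname{Nrd}_{A_E/E}(\lambda)\gamma$ by the first remark. Multiplicativity of the reduced norm then gives $\gamma=\operatorname{Nrd}_{A_E/E}(z\lambda^{-1})$, so by Gille's theorem applied over $E$ the class $\res[E/F]\bigl(\alpha\dlog(\beta)\wedge\dlog(\gamma)\bigr)$ vanishes in $H_3^3(E)$. If $E=F$ we are done; if $[E:F]=2$ I would descend by the transfer, using $\cores[E/F]\circ\res[E/F]=[E:F]\cdot\mathrm{id}=2\cdot\mathrm{id}$ together with the fact that $H_3^3(F)$ is $3$-torsion. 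Since $\gcd(2,3)=1$, $\res[E/F]$ is injective on $H_3^3(F)$, and $\alpha\dlog(\beta)\wedge\dlog(\gamma)=0$ over $F$ as well.

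For the converse, Gille's theorem furnishes $w\in A$ with $\operatorname{Nrd}_{A/F}(w)=\gamma$, and the goal is to produce a pure element of the form $\mu w$ with $\mu\in L$: then $z=\mu w$ and $\lambda=\mu$ give $z^3=\operatorname{Nrd}(\mu w)=\norm_{L/E}(\mu)\gamma$, exactly the required shape. Writing $w=a+by+cy^2$ in the cyclic decomposition $A=L\oplus Ly\oplus Ly^2$, the map $\mu\mapsto\mu w$ is $F$-linear, and the two conditions defining purity are $\operatorname{Trd}(\mu w)=\operatorname{Tr}_{L/F}(\mu a)=0$, a single $F$-linear condition cutting $L$ down to a space $H$ of dimension at least two, and $s(\mu w)=0$, where $s(v)=\tfrac12\bigl(\operatorname{Trd}(v)^2-\operatorname{Trd}(v^2)\bigr)$. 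A direct computation shows $s(\mu w)$ is a quadratic form in $\mu$; on $H$ it is therefore a quadratic form in at most three variables, so its projective zero locus is a conic or a pair of points and hence carries a closed point over an extension $E/F$ with $[E:F]\leq 2$. Pulling back this point gives $\mu\in L_E=E[\wp^{-1}(\alpha)]$ with $\mu w$ pure in $A_E$, which is precisely the asserted dichotomy: $E=F$ exactly when the relevant quadratic form is already isotropic over $F$.

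I expect the main obstacle to be the honest verification of the two points glossed over above. First, the explicit identification of $s(\mu w)$ as a quadratic form in $\mu$ and the confirmation that, after imposing the trace condition, it involves at most three variables so that a degree-$2$ point always exists; here the case distinction according to whether the $L$-component $a$ vanishes (giving a ternary conic) or not (giving a binary form on $H$) will need to be organized carefully. Second, and more delicate, is ensuring that the produced generator is genuinely noncentral, i.e.\ that $\norm_{L/E}(\lambda)\gamma\notin(E^\times)^3$. The computation of the previous paragraph shows that $\mu w\in E$ forces $w\in A\cap L_E=L$, equivalently $\gamma\in\norm_{L/F}(L^\times)$, so the borderline case to dispatch separately is when $[\alpha,\gamma)_{3,F}$ is split; there I would either choose a reduced-norm representative $w\notin L$ or argue the inseparable subfield directly, and it is this degenerate case, rather than the generic quadratic-form argument, where I anticipate the real work.
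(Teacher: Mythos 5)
Your proposal follows the same route as the paper's proof: Gille's theorem to produce a reduced-norm representative, the two characteristic coefficients (reduced trace and the quadratic coefficient $s$) giving one linear and one quadratic condition on the multiplier $\mu\in L$, and a nontrivial zero of the resulting quadratic form over an extension of degree at most $2$. Your converse direction is packaged a little differently --- Gille's theorem applied over $E$ plus injectivity of restriction for an extension of degree prime to $3$, instead of the paper's explicit $\dlog$ computation followed by corestriction --- but both arguments rest on $\operatorname{cor}_{E/F}\circ\operatorname{res}_{E/F}=2\cdot\operatorname{id}$ on the $3$-torsion group $H_3^3(F)$, so this is a cosmetic difference, not a different method.

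The one place you stopped short, the degenerate case $w\in L$, is a legitimate concern (in characteristic $3$ a central element $e$ satisfies $\operatorname{Trd}(e)=3e=0$ and $s(e)=3e^2=0$, so your two conditions genuinely fail to exclude $\mu w\in E$), but it is a three-line fix rather than ``the real work'': since $\gamma\notin(F^\times)^3$, having $w\in L$ forces $w\in L\setminus F$, so $w$ generates $L$. Now conjugate: if $uwu^{-1}\in L$ for a unit $u$, then $uLu^{-1}=F[uwu^{-1}]=L$, so $u$ normalizes $L^\times$; the normalizer of $L^\times$ in $A^\times$ is exactly $L^\times\cup L^\times y\cup L^\times y^2$ (its quotient by the centralizer $L^\times$ embeds in $\Gal(L/F)\cong\Z/3\Z$, and $y$ already realizes the generator). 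Hence any unit outside that set, for instance $u=1+y$, gives $w'=uwu^{-1}\notin L$ with $\operatorname{Nrd}(w')=\gamma$, and your generic argument applied to $w'$ automatically yields a noncentral pure element, hence the desired purely inseparable cubic subfield. It is worth noting that the paper's own proof never verifies that $z=\lambda r$ is noncentral --- it silently assumes the norm representative $r$ lies outside $K$ --- so the point you flagged is precisely a detail the published argument glosses over; your write-up, once completed as above, is the more careful one.
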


\begin{proof}
Suppose that $\alpha \dlog(\beta)\wedge d\log(\gamma)$ is trivial in $H_3^3(F)$. Then $\gamma$ is a norm of an element $r \in A=[\alpha,\beta)_{3,F}$.
Recall that $A$ is a central simple algebra of degree 3 over $F$, and therefore there exist three characteristic forms $\tr,\s,\norm : A \rightarrow F$ of homogeneous degrees 1,2 and 3, respectively, such that each $t\in A$ satisfies 
$$t^3-\tr(t)t^2-\s(t)t-\norm(t)=0.$$
Now $K=F[x : x^3-x=\alpha]$ is a cyclic subfield of $A$. Consider the maps $f_1$ and $f_2$ from $K$ to $F$ defined by
$$f_1(\lambda)=\tr(\lambda r), \ \text{and} \ f_2(\lambda)=\sigma(\lambda r).$$
The equation $f_1(\lambda)=0$ is a linear equation on $K$.
The space of solutions is either a two-dimensional $F$-subspace of $K$ or the entire field $K$ (when the equation is trivial). In either case, the space of solutions contains a two-dimensional $F$-subspace $V$ of $K$.
The restriction of $f_2$ to $V$ is a two-dimensional quadratic form over $F$. This form has a root in $E$ where $E$ is either $F$ or a quadratic extension of $F$, which means that there exists a nonzero $\lambda \in L=E \otimes_F K$ for which $\tr(\lambda r)=\sigma(\lambda r)=0$.
The element $z=\lambda r$ therefore generates a purely inseparable field extension of $E$ inside $A \otimes E$. Its norm is $\norm_{L/E}(\lambda) \cdot \norm(r)=\norm_{L/E}(\lambda) \gamma$.

In the opposite direction, suppose there exists $\lambda \in L=E[\wp^{-1}(\alpha)]$ such that $[\alpha,\beta)_{3,E}$ contains the purely inseparable subfield $E[z : z^3=\norm_{L/E}(\lambda) \gamma]$ where $E$ is either $F$ or a quadratic extension of $F$.
Then the algebra can be written as $[\delta,\norm_{L/E}(\lambda)\gamma)_{3,E}$ for some $\delta \in E$. Then a straight-forward computation shows that $\alpha \dlog(\beta)\wedge \dlog(\gamma)=0$:
\begin{eqnarray*}
\alpha\dlog(\beta)\wedge \dlog(\gamma)&=&-\alpha \dlog(\gamma)\wedge \dlog(\beta)=\\-\alpha \dlog(\norm_{L/E}(\lambda)\gamma)\wedge \dlog(\beta)&=&\alpha \dlog(\beta) 
\wedge \dlog(\norm_{L/E}(\lambda)\gamma)=
\\ \delta \dlog(\norm_{L/E}(\lambda)\gamma)\wedge \dlog(\norm_{L/E}(\lambda)\gamma)&=&0.
\end{eqnarray*}
If $E=F$ this means exactly that $\alpha\dlog(\beta)\wedge \dlog(\gamma)$ is trivial in $H_3^3(F)$.
If $E$ is a quadratic extension of $F$, it means that the restriction of $\alpha\dlog(\beta)\wedge \dlog(\gamma)$ to $E$ is trivial in $H_3^3(E)$.
However, the corestriction back to $F$ of this restriction to $E$ is $2\alpha\dlog(\beta)\wedge \dlog(\gamma)$, which is trivial in $H_3^3(F)$ if and only if $\alpha\dlog(\beta)\wedge \dlog(\gamma)$ is trivial.
Since the restriction is trivial, the coresstriction is trivial too. Consequently, $2\alpha\dlog(\beta)\wedge \dlog(\gamma)=0$ in $H_3^3(F)$, and so $2\alpha\dlog(\beta)\wedge \dlog(\gamma)=0$ in $H_3^3(F)$.
\end{proof}

\begin{thm}\label{norm}
Suppose $F$ is a quadratically closed field of $\operatorname{char}(F)=3$.
Let $[\alpha,\beta)_{3,F}$ be a division symbol algebra, and $\gamma \in F^\times \setminus (F^\times)^3$.
Then $\alpha \dlog(\beta)\wedge \dlog(\gamma)$ is trivial in $H_3^3(F)$ if and only if there exists $\lambda \in L=F[\wp^{-1}(\alpha)]$ such that $[\alpha,\beta)_{3,F}$ contains the purely inseparable subfield $F[z : z^3=\norm_{L/F}(\lambda) \gamma]$.
\end{thm}

\begin{proof}
The statement follows from Theorem \ref{norm6}, given the assumption that $F$ is quadratically closed, which means that the field $E$ in the proof of that theorem cannot be a quadratic extension of $F$, and so $E=F$.
\end{proof}

The implication on inseparable linkage is immediate:
\begin{thm}\label{ins6}
Suppose $F$ is a field of $\operatorname{char}(F)=3$.
Let $[\alpha,\beta)_{3,F}$ and $[\alpha,\gamma)_{3,F}$ be division symbol algebras.
If $\alpha \dlog(\beta)\wedge \dlog(\gamma)$ is trivial in $H_3^3(F)$ then the algebras share a splitting field which is a degree 3 inseparable extension of either $F$ or a quadratic extension of $F$.
\end{thm}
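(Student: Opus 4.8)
The plan is to derive this essentially formally from \Tref{norm6}, which supplies all the hard technical content. First I would check that the hypotheses of \Tref{norm6} are met: since $[\alpha,\gamma)_{3,F}$ is a division algebra it has exponent $3$, so $\gamma$ cannot be a cube in $F^\times$ (a cube value would force the symbol to split), and hence $\gamma \in F^\times \setminus (F^\times)^3$. Applying \Tref{norm6} to $[\alpha,\beta)_{3,F}$ and $\gamma$ under the hypothesis that $\alpha\dlog(\beta)\wedge\dlog(\gamma)$ is trivial in $H_3^3(F)$, I obtain a field $E$ that is either $F$ or a quadratic extension of $F$, an element $\lambda \in L = E[\wp^{-1}(\alpha)]$, and a purely inseparable degree $3$ subfield $E[z : z^3 = \norm_{L/E}(\lambda)\gamma]$ of $[\alpha,\beta)_{3,E}$. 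Being a degree $3$ subfield of the degree $3$ algebra $[\alpha,\beta)_{3,E}$, the field $E[z]$ splits $[\alpha,\beta)_{3,E}$, and by base change it splits $[\alpha,\beta)_{3,F}$ as well.

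The second step is to show that this same field $E[z]$ splits $[\alpha,\gamma)_{3,F}$, which is where the \emph{common} splitting field materializes. Writing $c = \norm_{L/E}(\lambda)$, the element $c$ is a norm from the cyclic extension $L/E$ attached to the first slot $\alpha$, so the symbol algebra $[\alpha,c)_{3,E}$ is split. By bimultiplicativity of the symbol in its second slot, $[\alpha,c\gamma)_{3,E}$ is Brauer equivalent to $[\alpha,\gamma)_{3,E}$. On the other hand $E[z] = E[\sqrt[3]{c\gamma}]$ is precisely the standard purely inseparable subfield of $[\alpha,c\gamma)_{3,E}$ generated by the element $y$ with $y^3 = c\gamma$; as a degree $3$ subfield of a degree $3$ central simple algebra it splits $[\alpha,c\gamma)_{3,E}$. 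Since splitting fields depend only on the Brauer class, $E[z]$ splits $[\alpha,\gamma)_{3,E}$, and therefore, after base change, it splits $[\alpha,\gamma)_{3,F}$.

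Combining the two steps, $E[z]$ is a common splitting field of $[\alpha,\beta)_{3,F}$ and $[\alpha,\gamma)_{3,F}$; by construction it is a purely inseparable degree $3$ extension of $E$, and $E$ is either $F$ or a quadratic extension of $F$, which is exactly the claimed conclusion. The only genuine content beyond \Tref{norm6} lies in the second step, and within it the single point requiring care is the transition from $[\alpha,\beta)$ to $[\alpha,\gamma)$: one must recognize that the norm factor $\norm_{L/E}(\lambda)$ is exactly what renders $[\alpha,c\gamma)_{3,E}$ Brauer equivalent to $[\alpha,\gamma)_{3,E}$, so that the purely inseparable field produced by \Tref{norm6} sits inside (or at least splits) both algebras at once. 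Everything else reduces to formal base change and the standard dictionary between degree $3$ subfields and splitting fields of degree $3$ central simple algebras.
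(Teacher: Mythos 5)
Your proposal is correct and follows essentially the same route as the paper: apply \Tref{norm6} to obtain the purely inseparable subfield $E[z:z^3=\norm_{L/E}(\lambda)\gamma]$ of $[\alpha,\beta)_{3,E}$, then use the fact that $\norm_{L/E}(\lambda)$ is a norm from $L=E[\wp^{-1}(\alpha)]$ to identify $[\alpha,\gamma)_{3,E}$ with $[\alpha,\norm_{L/E}(\lambda)\gamma)_{3,E}$, so that the same inseparable field splits both algebras. Your write-up is in fact slightly more careful than the paper's (you verify $\gamma\notin(F^\times)^3$ and spell out the Brauer-equivalence step), but the underlying argument is identical.
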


\begin{proof}
By Theorem \ref{norm6}, $E[z : z^3=\norm_{L/E}(\lambda) \gamma]$ is a splitting field of $[\alpha,\beta)_{3,F}$ for some nonzero $\lambda \in L=E[\wp^{-1}(\alpha)]$. Therefore $[\alpha,\beta)_{3,E}=[\delta,\norm_{L/E}(\lambda) \gamma)_{3,E}$ for some $\delta \in E$, where $E$ is either $F$ or a quadratic extension of $F$.
At the other end, $[\alpha,\gamma)_{3,E}=[\alpha,\norm_{L/E}(\lambda)\gamma)_{3,E}$, and so the algebras $[\alpha,\beta)_{3,F}$ and $[\alpha,\gamma)_{3,F}$ share $L$ as a splitting field.
\end{proof}

\begin{thm}\label{ins}
Suppose $F$ is a quadratically closed field of $\operatorname{char}(F)=3$.
Let $[\alpha,\beta)_{3,F}$ and $[\alpha,\gamma)_{3,F}$ be division symbol algebras.
If $\alpha \dlog(\beta)\wedge \dlog(\gamma)$ is trivial in $H_3^3(F)$ then the algebras are inseparably linked.
\end{thm}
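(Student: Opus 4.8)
The plan is to specialise Theorem~\ref{ins6} to the quadratically closed setting, in which the auxiliary field $E$ produced there is forced to be $F$ itself; equivalently, I would invoke the quadratically closed norm criterion of Theorem~\ref{norm} and then repeat the short algebraic bookkeeping from the proof of Theorem~\ref{ins6}.

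First I would verify that the hypotheses of Theorem~\ref{norm} are met, namely that $\gamma \in F^\times \setminus (F^\times)^3$. This is automatic from the hypothesis that $[\alpha,\gamma)_{3,F}$ is a division algebra: a symbol algebra $[\alpha,\gamma)_{3,F}$ splits as soon as $\gamma$ is a norm from $L = F[\wp^{-1}(\alpha)]$, and every cube in $F$ is such a norm (being the norm of a scalar), so $\gamma$ cannot be a cube. With $\alpha\dlog(\beta)\wedge\dlog(\gamma)$ trivial in $H_3^3(F)$, Theorem~\ref{norm} then furnishes a nonzero $\lambda \in L$ such that $[\alpha,\beta)_{3,F}$ contains the purely inseparable degree $3$ subfield $F[z : z^3 = \norm_{L/F}(\lambda)\gamma]$.

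Next I would rewrite both algebras around this common datum. Since a purely inseparable degree $3$ subfield is maximal and can be taken as the generator in the second slot of a symbol presentation, $[\alpha,\beta)_{3,F} \cong [\delta, \norm_{L/F}(\lambda)\gamma)_{3,F}$ for some $\delta \in F$. On the other side, multiplying the second argument of a symbol $3$-algebra by a norm from the cyclic extension determined by the first argument leaves the Brauer class, hence the algebra, unchanged, so $[\alpha,\gamma)_{3,F} \cong [\alpha, \norm_{L/F}(\lambda)\gamma)_{3,F}$. Both algebras therefore contain the purely inseparable degree $3$ extension $F[\sqrt[3]{\norm_{L/F}(\lambda)\gamma}]$, which is precisely the statement that they are inseparably linked.

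All of the substance is carried by Theorem~\ref{norm} (and ultimately Theorem~\ref{norm6}); the remaining steps are formal. The only point genuinely requiring the quadratic-closure hypothesis is the passage from Theorem~\ref{ins6}, where the shared inseparable splitting field a priori lives over a quadratic extension $E$ of $F$: quadratic closure forbids such an $E$ and collapses the construction to $E = F$. I do not anticipate any obstacle beyond this observation.
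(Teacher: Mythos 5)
Your proposal is correct and follows essentially the same route as the paper: invoke the norm criterion (Theorem~\ref{norm}, i.e.\ Theorem~\ref{norm6} with $E=F$ forced by quadratic closure) to place $F[z : z^3=\norm_{L/F}(\lambda)\gamma]$ inside $[\alpha,\beta)_{3,F}$, and then rewrite $[\alpha,\gamma)_{3,F}\cong[\alpha,\norm_{L/F}(\lambda)\gamma)_{3,F}$ as in Theorem~\ref{ins6} to exhibit the shared purely inseparable subfield. Your explicit check that $\gamma\notin(F^\times)^3$ (since otherwise $[\alpha,\gamma)_{3,F}$ would split) is a small point the paper leaves implicit, but it does not change the argument.
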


\begin{proof}
Since $F$ is quadratically closed, the field $L$ from the proof of Theorem \ref{norm6} is a degree 3 inseparable extension of $F$. The statement then follows.
\end{proof}

\begin{cor}
When $F$ is a quadratically closed field of characteristic 3 with trivial $H_3^3(F)$, cyclic linkage and inseparable linkage are the same for division symbol $p$-algebras of degree 3 over $F$.
\end{cor}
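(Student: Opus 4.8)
The plan is to establish the two implications separately, observing that one of them is already available in the literature and the other is a direct consequence of the machinery developed above. First I would recall the known direction: for division symbol $p$-algebras of prime degree over any field of characteristic $p$, inseparable linkage implies cyclic linkage, as proven in \cite{Chapman:2015}. This settles one inclusion immediately and uses neither the quadratic closure of $F$ nor the vanishing of $H_3^3(F)$.

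For the converse, suppose $A_1$ and $A_2$ are cyclically linked division symbol algebras of degree $3$ over $F$. By the definition of cyclic linkage they share a cyclic degree $3$ subfield of $F$, so I can present them simultaneously as $A_1 = [\alpha,\beta)_{3,F}$ and $A_2 = [\alpha,\gamma)_{3,F}$ with a common $\alpha$. The decisive point is that the hypothesis $H_3^3(F) = 0$ renders the class $\alpha\dlog(\beta)\wedge\dlog(\gamma)$ trivial automatically, since it is an element of a group that is assumed to vanish. Having thereby verified the hypothesis of \Tref{ins} at no cost, and because $F$ is quadratically closed of characteristic $3$, that theorem applies directly and shows that $A_1$ and $A_2$ are inseparably linked.

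Putting the two implications together yields the stated equivalence. I do not expect a genuine obstacle here: the entire substance of the corollary has already been packaged into \Tref{ins}, and the role of the extra assumption $H_3^3(F) = 0$ is merely to guarantee that the cohomological obstruction appearing in that theorem vanishes for \emph{every} cyclically linked pair, so that cyclic linkage can be promoted to inseparable linkage across the board. The only subtlety worth flagging is to confirm that, once written with a common $\alpha$, both $[\alpha,\beta)_{3,F}$ and $[\alpha,\gamma)_{3,F}$ genuinely satisfy the division hypotheses of \Tref{ins}, which follows from $A_1$ and $A_2$ being division algebras to begin with.
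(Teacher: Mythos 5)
Your proposal is correct and follows exactly the argument the paper intends: the paper states this corollary without a written proof, treating it as immediate from \Tref{ins} (cyclic linkage plus vanishing of $H_3^3(F)$ gives inseparable linkage, exactly as you argue) together with the fact from \cite{Chapman:2015}, quoted in the introduction, that inseparable linkage always implies cyclic linkage. Your flagged subtlety about the division hypotheses is handled correctly as well, since the symbol presentations $[\alpha,\beta)_{3,F}$ and $[\alpha,\gamma)_{3,F}$ are isomorphic to the given division algebras.
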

Such fields can be easily constructed. For example, take the algebraic closure $F_0$ of any field of characteristic $3$ (such as $\mathbb{F}_3$), and look at the quadratic closure $F$ of the function field $F_0(\alpha,\beta)$ in two algebraically independent variables $\alpha$ and $\beta$. The field $F$ is quadratically closed, of characteristic 3, has trivial $H_3^3(F)$, and admits different division symbol $p$-algebras of degree 3.

The following known result enables us to connect this to total cyclic linkage:
\begin{prop}[{\cite[Corollary 3.3]{ChapmanDolphin:2018}}]\label{Total}
For fields $F$ of positive characteristic $p$, if $[\alpha,\beta)_{p,F}$ and $[\alpha,\gamma)_{p,F}$ are totally cyclically linked then $\alpha \dlog(\beta)\wedge \dlog(\gamma)$ is trivial in $H_p^3(F)$.
\end{prop}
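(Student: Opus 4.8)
The plan is to reduce everything to Gille's criterion quoted at the start of the paper: $\alpha\dlog(\beta)\wedge\dlog(\gamma)$ is trivial in $H_p^3(F)$ if and only if $\gamma$ lies in the image $\norm(A^\times)$ of the reduced norm of $A=[\alpha,\beta)_{p,F}$. Thus the whole statement becomes: if $A=[\alpha,\beta)_{p,F}$ and $B=[\alpha,\gamma)_{p,F}$ are totally cyclically linked, then $\gamma$ is a reduced norm from $A$. First I would record the meaning of the hypothesis. Both algebras contain $K=F[\wp^{-1}(\alpha)]$, so they are automatically cyclically linked, and \emph{total} cyclic linkage means that a cyclic degree-$p$ extension $F[\wp^{-1}(a)]$ embeds into $A$ if and only if it embeds into $B$. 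Since a degree-$p$ subfield of a degree-$p$ division algebra is maximal and hence splitting, this says precisely that $A$ and $B$ are split by exactly the same cyclic degree-$p$ extensions of $F$; in presentation terms, $A\cong[a,b)_{p,F}$ for some $b$ if and only if $B\cong[a,c)_{p,F}$ for some $c$. Note that restriction to $K$ itself carries no information: over $K$ one has $\alpha=x^p-x=\wp(x)$, so $\alpha\dlog(\beta)\wedge\dlog(\gamma)=\wp\!\left(x\dlog(\beta)\wedge\dlog(\gamma)\right)$ already vanishes in $H_p^3(K)$, and the linkage hypothesis must therefore be exploited through the \emph{other} common cyclic subfields.

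Next I would try to produce $\gamma$ as a reduced norm from $A$. On the $B$ side this is free: the standard generator $y$ with $y^p=\gamma$ satisfies $\norm(y)=\gamma$, so $\gamma\in\norm(B^\times)$. I would attempt to transport this to $A$ by running the characteristic-polynomial technique of \Tref{norm6} in reverse: there, from an element of reduced norm $\gamma$ one uses the trace and the second coefficient $\s$ to cut out a two-dimensional space inside a common cyclic subfield and extract a suitable element. Here I would instead start from a cyclic degree-$p$ subfield $E=F[\wp^{-1}(a)]$ common to $A$ and $B$ (supplied by the linkage hypothesis, chosen as general as possible) and use the inclusion $N_{E/F}(E^\times)\subseteq\norm(A^\times)\cap\norm(B^\times)$ together with the values realised by the reduced-norm forms to write $\gamma$, which is already a reduced norm from $B$, as such a subfield norm times a factor already known to be a reduced norm from $A$.

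The main obstacle is exactly this last bridge, from the equality of the families of splitting cyclic extensions (a condition about restriction kernels in $H_p^2$) to the membership $\gamma\in\norm(A^\times)$ (a condition about values of a degree-$p$ form). Two tempting shortcuts both fail: one cannot simply invoke a description of $\norm(A^\times)$ as the group generated by $N_{K/F}(K^\times)$ and $\beta$, because the reduced-norm form represents strictly more than this subgroup; and one cannot argue Brauer-theoretically that $B\sim A^{\otimes t}$, since \Tref{TotalConst} produces totally cyclically linked pairs for which no such $t$ exists. I therefore expect the genuine argument to require either a chain/common-slot equivalence for symbol $p$-algebras sharing $K$, controlling how the second slot varies as the cyclic subfield is changed, or a specialization argument over a generic splitting field; locating and exploiting such a mechanism is where the real content of \cite[Corollary 3.3]{ChapmanDolphin:2018} lies.
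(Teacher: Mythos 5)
Your proposal is not a proof: it sets the problem up correctly and then stops exactly at the point where the mathematical content begins. The reduction via Gille's theorem (triviality of $\alpha\dlog(\beta)\wedge\dlog(\gamma)$ in $H_p^3(F)$ is equivalent to $\gamma$ lying in the reduced norm group of $A=[\alpha,\beta)_{p,F}$), the translation of total cyclic linkage into ``$A$ and $B$ are split by exactly the same cyclic degree-$p$ extensions,'' the observation that restriction to $K=F[\wp^{-1}(\alpha)]$ kills the class and hence carries no information, and your two warnings (that $\norm(A^\times)$ is strictly larger than the group generated by $N_{K/F}(K^\times)$ and $\beta$, and that one cannot assume $B\sim A^{\otimes t}$, by \Tref{TotalConst}) are all correct and show a sound grasp of the terrain. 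But the implication that actually constitutes the proposition --- from the equality of the families of cyclic splitting fields to the membership $\gamma\in\norm(A^\times)$ --- is never established. Your third paragraph gestures at choosing a common cyclic subfield $E$ ``as general as possible'' and using $N_{E/F}(E^\times)\subseteq\norm(A^\times)\cap\norm(B^\times)$ together with ``the values realised by the reduced-norm forms,'' but no mechanism is supplied for writing $\gamma$ in that shape, and your final paragraph concedes that you have not found one. An honest statement that the real content lies elsewhere does not discharge it: as written, this is an analysis of the difficulty, not a proof.

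On the comparison you were asked against the paper: there is nothing internal to compare with, because the paper does not prove this proposition either --- it imports it verbatim from \cite[Corollary 3.3]{ChapmanDolphin:2018}, and all of the work lives in that reference. A blind attempt therefore has to reconstruct (or replace) the argument of that corollary, which means doing two things your sketch only names: first, producing an explicit and plentiful supply of common cyclic subfields, e.g.\ determining which Artin--Schreier extensions $F[\wp^{-1}(a)]$ embed in $B=[\alpha,\gamma)_{p,F}$ (already nontrivial for $p>2$, where computing $\wp$ of elements like $x+\mu y$ requires Jacobson's formula or a norm-theoretic substitute); and second, converting the resulting rewritings of the symbols in $H_p^2(F)$ into the vanishing of the wedge class in $H_p^3(F)$, using the fact that wedging with $\dlog(\gamma)$ induces a well-defined map $H_p^2(F)\rightarrow H_p^3(F)$. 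Until a bridge of this kind is built, the central step of your plan is missing.
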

\begin{cor}
When $F$ is a quadratically closed field of characteristic 3, every two totally cyclically linked division symbol $p$-algebras of degree 3 over $F$ are inseparably linked.
\end{cor}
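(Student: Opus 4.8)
The plan is to reduce the corollary to the two results already established, namely \Pref{Total} and \Tref{ins}, by first putting the two algebras into a common presentation. First I would invoke the hypothesis of total cyclic linkage: by definition the two algebras are in particular cyclically linked, so they share a cyclic degree $3$ field extension of $F$. As recalled in the introduction, sharing such an extension $F[\wp^{-1}(\alpha)]$ lets one write the first algebra as $A_1=[\alpha,\beta)_{3,F}$ and the second as $A_2=[\alpha,\gamma)_{3,F}$, with a common first slot $\alpha$ and suitable $\beta,\gamma \in F^\times$.

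With this presentation in hand, the next step is to feed it into \Pref{Total}. Since $A_1$ and $A_2$ are totally cyclically linked and written as $[\alpha,\beta)_{3,F}$ and $[\alpha,\gamma)_{3,F}$, that proposition gives that $\alpha\dlog(\beta)\wedge\dlog(\gamma)$ is trivial in $H_3^3(F)$.

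Finally, I would apply \Tref{ins}, whose hypotheses are now exactly met: $F$ is quadratically closed of characteristic $3$, both $A_1$ and $A_2$ are division symbol algebras of the form $[\alpha,\beta)_{3,F}$ and $[\alpha,\gamma)_{3,F}$, and the class $\alpha\dlog(\beta)\wedge\dlog(\gamma)$ vanishes in $H_3^3(F)$ by the previous step. \Tref{ins} then yields that $A_1$ and $A_2$ are inseparably linked, which is the desired conclusion.

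The only point requiring genuine care — and hence the main obstacle, modest as it is — is the first step: verifying that total cyclic linkage really does supply a common cyclic slot $\alpha$, so that both algebras can be written simultaneously as $[\alpha,\beta)_{3,F}$ and $[\alpha,\gamma)_{3,F}$ and the later results apply verbatim. Everything after that is a direct chaining of \Pref{Total} and \Tref{ins}, introducing no new computation.
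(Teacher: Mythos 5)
Your proposal is correct and is essentially identical to the paper's own proof: both reduce total cyclic linkage to cyclic linkage to obtain the common presentation $[\alpha,\beta)_{3,F}$ and $[\alpha,\gamma)_{3,F}$, then chain Proposition~\ref{Total} with Theorem~\ref{ins}. The ``point requiring care'' you flag is handled the same way the paper handles it, via the definition of cyclic linkage recalled in the introduction.
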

\begin{proof}
Consider two totally cyclically linked division symbol $p$-algebras of degree 3 over $F$.
Since they are cyclically linked, one can write them as $[\alpha,\beta)_{3,F}$ and $[\alpha,\gamma)_{3,F}$ for appropriate $\alpha,\beta,\gamma$. By Proposition \ref{Total}, $\alpha \dlog(\beta)\wedge\dlog(\gamma)$ is trivial in $H_3^3(F)$. Then by Theorem \ref{ins}, the algebras are inseparably linked.
\end{proof}

\section*{Acknowledgements}

The author thanks the anonymous referee for the careful reading of the submitted manuscript and the useful comments.

\end{document}